\documentclass[12pt]{amsart}
\usepackage[margin = 0.9in]{geometry}

\begin{document}

\title[Fast Convergence for Point Estimates]{Fast Convergence for Weighted Least Squares Estimates}

\author{Andrey Sarantsev}

\begin{abstract}
It is well-known that maximum likelihood estimates converge faster than the classic square root rate if the Fisher information is infinite. This is often the case when the effective region depends on the estimated parameters, or when density has a singularity inside the effective region at a point dependent on the estimated parameters. We present a one-parameter family of bivariate absolutely continuous distributions on the half-space with smooth densities. The effective domain is always the same half-space and does not depend on this parameter. The order of magnitude for the weighted least squares estimate is asymptotically smaller than the classic square root rate. For the Gaussian variance mixture case, the maximum likelihood estimate coincides with this weighted least squares estimate.
\end{abstract}

\thispagestyle{empty}

\subjclass[2020]{62F10, 62F12, 60F05}

\keywords{stable subordinator, Fisher information, maximum likelihood estimate, weighted least squares, super-efficient estimate}

\maketitle

\newtheorem{theorem}{Theorem}
\newtheorem{lemma}{Lemma}
\theoremstyle{definition}
\newtheorem{assumption}{Assumption}
\newtheorem{definition}{Definition}
\newtheorem{example}{Example}

\thispagestyle{empty}

\section{Introduction}

\subsection{Classic results} Take a (univariate or multivariate) family of absolutely continuous distributions, parameterized by $\theta \in \mathbb R$. Their Lebesgue density is $f(x, \theta)$. Take a sample of independent identically distributed (IID) random variables $X_1, \ldots, X_n$ from this distribution. Our goal is to study the maximum likelihood estimate (MLE) $\hat{\theta}_n$. Under classic results by Sir Ronald Fisher and others, this estimate is asymptotically normal:
$$
\sqrt{n}(\hat{\theta}_n - \theta) \stackrel{d}{\to} \mathcal N(\mathbf{0}, \Sigma^{-1}),
$$
where $\Sigma$ is the {\it Fisher information}, defined as the variance of the {\it score function} $\nabla_{\theta}\log f(x, \theta)$ with $x := X$ having density $f(x, \theta)$. See for example \cite[Theorem 10.1.12]{CasellaBerger}. Therefore, the classic convergence rate is $\sqrt{n}$: The distance between the MLE and the true value is inversely proportional to the square root of the number of data points. However, there exist cases when the convergence rate is faster: The classic example is the uniform distribution on $[0, \theta]$, where $\theta > 0$ is an unknown parameter. The MLE is then 
$$
\hat{\theta}_n = \max(X_1, \ldots, X_n),
$$
and converges to the true value with fast rate $n$ instead of $\sqrt{n}$, see \cite[Example 5.5.11]{CasellaBerger}:
$$
n(\theta - \hat{\theta}_n) \stackrel{d}{\to} \theta E,\quad n \to \infty,
$$
where $E$ is the standard exponential random variable. The reason for this fast convergence is dependence of the effective domain $[0, \theta]$ (and not just the likelihood function) upon the unknown parameter $\theta$. One can construct other such examples, with rates intermediate between $\sqrt{n}$ and $n$. See, for example, leave-one-out likelihood in \cite{LOO}, with super-effective rates $n^{\varepsilon + 1/2}$. Another case when the Fisher information is infinite is considered in \cite{Litvinova}. 

\subsection{Our contribution} A natural question is to find a family of continuous densities {\it with the same effective domain}, for which the MLE exists and is unique, but converges to the true value faster than $\sqrt{n}$. In this short note, we present a simple example of a bivariate family of densities with this property: Theorem~\ref{thm:main}. We can get any rate faster than $n^{0.5+\varepsilon}$ for example, $n$ or $n^2$. We can also get rates of the type 
$$
\sqrt{n\ln^{1 + \varepsilon}n},\quad \sqrt{n\ln\ln n}, \ldots
$$

\begin{assumption}
Take two random variables $Z$ and $W > 0$ with continuous  densities 
$$
g_W : (0, \infty) \to (0, \infty), \quad g_Z : \mathbb R \to (0, \infty).
$$
Also, $\mathbb E[Z] = 0$ and $\mathbb E[Z^2] = 1$. 
\label{asmp:basic}
\end{assumption}

An important case is the standard normal $Z$. 

\begin{definition}
The {\it variance mixture} is defined as 
\begin{equation}
\label{eq:repr}
X = \mu + \sqrt{W}Z
\end{equation}
If $Z \sim \mathcal N(0, 1)$, this is the {\it Gaussian variance mixture}. 
\end{definition}

These Gaussian mixtures (and their generalization, the {\it Gaussian mean-variance mixture}) are a classic study object. These include, for example, a large class of generalized hyperbolic distributions, \cite{BN}. For a multivariate generalization, see \cite{Hintz}. For semiparametric estimation, see \cite{Belo}. Then $X$ has a continuous Lebesgue density $g_X : (0, \infty) \to (0, \infty)$. The conditional distribution of $X$ given $W$ is $X\mid W \sim \mathcal N(\mu, W)$. The bivariate random variable $(X, W)$ has joint density
\begin{equation}
\label{eq:joint}
g(x, w; \mu) = \frac{g_W(w)}{\sqrt{w}}\cdot g_Z\left(\frac{x - \mu}{\sqrt{w}}\right).
\end{equation}

\begin{assumption} Take $(X_1, W_1), \ldots, (X_n, W_n)$, IID copies of the $(X, W)$ from~(\ref{eq:repr}).
\label{asmp:iid}
\end{assumption}

We have: $X_i\mid W_i \sim \mathcal N(\mu, W_i)$, and we can represent
$X_i = \mu + \sqrt{W_i}Z_i$ for IID copies $Z_1, \ldots, Z_n$ of $Z$ from Assumption~\ref{asmp:basic}. Applications include: 

\begin{itemize}
\item Measuring a quantity with observable variance (observation quality) $W_i$ dependent on the observation;
\item Observing stock index returns $X_i$ and volatility $W_i$ (measured by VIX, for example), and estimating long-term mean returns;
\item Connections with empirical Bayes estimation of multivariate normal. 
\end{itemize}

We are interested in the case when $\mathbb E[W^{-1}] = \infty$. Then, as we see in Lemma~\ref{lemma:Fisher}, the Fisher information is infinite. 

\subsection{Motivation} This research was inspired by our recent manuscript on $(X, W)$, where $W$ is Gamma and $X$ is Gaussian mean-variance mixture: $X\mid W \sim \mathcal N(\mu + \delta W, \sigma^2W)$. To our surprise, we established faster-than-usual convergence rate $\hat{\mu}_n \to \mu$ for some cases. In particular, if $W$ is exponential (and then $X$ is asymmetric Laplace), then the rate is $\sqrt{n\ln n}$. This works for the case of only variance mixture, without mixing using the mean, with $\delta = 0$ and symmetric Laplace $X$. There, the joint density of $(X, W)$ is continuous on $[0, \infty)\times \mathbb R$. 

In this short note, we generalize this idea for various densities of $W$ other than exponential/Gamma and for various densities of $Z$ other than standard Gaussian. We have only one parameter instead of five: De facto, we assume $\delta = 0$ and $\sigma = 1$. The marginal distribution of $W$ is assumed to be completely known and does not contain any parameters. 

\subsection{Overview of our results} Theorem~\ref{thm:main} for the Gaussian case and Theorem~\ref{thm:general} for the general case are our main results. In the Gaussian case, we compute the MLE. Although we know the exact distribution of this estimate in the Gaussian case, see Lemma~\ref{lemma:tech}, and therefore we do not need asymptotic results for statistical analysis, we present said results to give a desired example. Also, this proof is useful for the general case as well (when $Z$ is not Gaussian). 

In Theorem~\ref{thm:general}, we get weaker results for general $W$: The order of magnitude is the same. But we only get tightness instead of convergence. Therefore, we do not have any limiting distribution known. Also, we use weighted least squares (WLS) estimate, which can be explicitly computed. In the Gaussian case, this coincides with the MLE. But in the general case, these do not coincide; and it is not possible to compute the MLE explicitly. These results apply to any distribution $W$ (with mean zero and variance one), not just Gaussian. 

\subsection{Organization of the article} In Section 2, we present some further discussion and motivation for our research. We also discuss further possible directions of research. Section 3 discusses the Gaussian case, when $Z \sim \mathcal N(0, 1)$, and contains the first main result: Theorem~\ref{thm:main}. Section 4 is devoted to the general case, when $Z$ is the general random variable with mean zero and variance one, and contains the second main result: Theorem~\ref{thm:general}. Section 5 contains examples of possible super-efficient convergence rates faster than $\sqrt{N}$. It turns out we can get very diverse rates by changing the distribution of $W$. Section 6 is devoted to the complicated proof of Theorem~\ref{thm:main}.

\section{Discussion and further research} 

\subsection{Estimate choice: MLE vs WLS} Usually, the literature on super-efficiency (some of which is cited above) focuses on the MLE. Here, we made an unusual but deliberate choice to use the WLS. It is easy to compute WLS for both Gaussian and non-Gaussian $Z$. However, in the Gaussian case, this WLS, in fact, coincides with the MLE.

Working with the MLE in the non-Gaussian case would likely require us to impose additional assumptions: First, to get existence and uniqueness of said estimate, we would possibly need convexity in some form. Second, we will need to modify the proofs of asymptotic normality for the MLE in the classic case, when the Fisher information is finite, and convergence rate is the standard $\sqrt{n}$. This, in turn, will require interchange of differentiation and integration. We need  yet another assumption for this. 

In contrast, for WLS we do not need any of these assumptions. And the proof is straightforward. However, a valid criticism point is that we could get only tightness results instead of convergence (for WLS in the non-Gaussian case).

\subsection{Known variance interpretation} In practice, usually we observe data $X_1, \ldots, X_n$ which are independent identically distributed and therefore have the same (but unknown) mean and the same (but unknown) variance. Then the standard estimate $\overline{x}$ for the mean has standard convergence rate $\sqrt{n}$ (of course, if $X$ has finite second moment). 

But what if we know the variance $W_i$ of each observation $X_i$? What if this variance could be different for each observation? If the variance is separated from zero, the order of magnitude of these estimates will not change. Assume now that many observations $X_i$ have very small variance $W_i$. More rigorously, assume strong concentration of measure for $W$ around zero; or, equivalently, very heavy tails for $1/W$. Then estimates of $\mu$ are more precise and efficient. Changing the tail of $1/W$, we can adjust the convergence rate for this WLS estimate. For the rate, we can get any regularly varying function with index greater than $1/2$, and some functions with idex equal to $1/2$. 

\subsection{Further research} We stress that we could have generalized this for Gaussian mean-variance mixtures. This would involve three parameters $\mu, \delta, \sigma$ instead of only $\mu$. Moreover, we could have included other unknown parameters in the distribution of $W$, such as the shape and the scale for the Gamma distribution. However, we deliberately chose to present the simplest possible example in this short note. Generalizations are left for further research. 

One could connect this to empirical Bayes method. Also, we could consider the case when the mean $\mu$ differs from observation to observation: $X_i\mid W_i \sim \mathcal N(\mu_i, W_i)$, for $i = 1, \ldots, n$, but the vector $(\mu_1, \ldots, \mu_n)$ of such means belongs to a certain linear subspace. 

Finally, we could search for a univariate family of densities with similar properties as requested in this article. However, we feel skeptical: In our opinion, to get super-efficient estimates, we need to have a cusp or a discontinuity inside the effective domain, and its location must be dependent on the parameter. Alternatively, the effective domain itself must be dependent on the parameter. In our example, we do have a singularity dependent upon the parameter $\mu$. However, this point of singularity is $x = \mu$ and $w = 0$, which is {\it at the boundary} of the effective domain $\{(x, w)\mid x \in \mathbb R,\, w > 0\}$, and not inside. 

\section{The Gaussian Case}

In this section, we consider the case when Assumptions~\ref{asmp:basic} and~\ref{asmp:iid} hold, and $Z \sim \mathcal N(0, 1)$: Then the density of $Z$ is standard normal, 
\begin{equation}
\label{eq:Gaussian-standard}
g_Z(z) = \frac1{\sqrt{2\pi}}\exp\left[-z^2/2\right].
\end{equation}
The main result is Theorem~\ref{thm:main}, when (under the condition $\mathbb E[W^{-1}] = \infty$) we find an asymptotic result with an explicit rate of weak convergence and an explicit limit. This provides an example of a two-dimensional one-parameter family of smooth densities with the same effective domain (half-space), when the rate of convergence is faster than standard (super-efficient). 

\subsection{Preliminary remarks} This density~(\ref{eq:joint}) is a continuous function $g : \mathbb R\times (0, \infty) \to (0, \infty)$. The effective region is always the same: $\mathbb R\times(0, \infty)$, irrespective of the values of the parameter $\mu$. Inside this effective region, there are no singularities. 

For $x \ne \mu$, as $w \to 0$, then $g(x, w) \to 0$. But for $x = \mu$, as $w \to 0$, we get $g(\mu, w) \sim (2\pi w)^{-1/2}g_W(w)$. Only at $(\mu, 0)$, we might have some singularity, depending on the behavior of the density $g_W$ at zero. The marginal density of $X$ does not have any singularities. 

Using~(\ref{eq:Gaussian-standard}), we can rewrite the density~(\ref{eq:joint}) as a {\it curved exponential family}
$$
\frac{g_W(w)}{\sqrt{2\pi w}}\exp\left[-\frac{x^2}{2w} + \frac{x}{w}\mu - \frac{1}{2w}\mu^2\right]
$$
with {\it sufficient statistic} 
$$
\mathbf{T} = (\mathbf{T}_1, \mathbf{T}_2) := (x/w, -1/(2w)).
$$
The family is {\it curved} since we have a two-dimensional sufficient statistic but only one-dimensional parameter. 

\subsection{Fisher information} Let us compute the score function and the Fisher information for the joint density~(\ref{eq:joint}). The score function is
$$
s(x, w, \mu) = \frac{\partial \ln g(x, w, \mu)}{\partial \mu} = -\frac{x-\mu}{w}.
$$
The Fisher information is 
\begin{equation}
\label{eq:F}
\Sigma = \mathbb E[s^2(X, W, \mu)] = \mathbb E\left[\frac{(X - \mu)^2}{W^2}\right].
\end{equation}
Plugging~(\ref{eq:repr}) in~(\ref{eq:F}), we get: $\Sigma = \mathbb E[1/W]$. If this is finite, we have classic convergence rate $\sqrt{N}$. But we are interested in the case when $\Sigma = \infty$. This observation matches Lemma~\ref{lemma:Fisher} for the general case. 

\subsection{Computation of the estimate} We have one unknown parameter $\mu$. Now, pick a sample from this distribution: a sequence of IID random variables $(X_1, W_1), \ldots, (X_n, W_n)$. Write the {\it log-likelihood function}
\begin{equation}
\label{eq:LL}
\ell(\mu) := \sum\limits_{i=1}^n\ln g(X_i, W_i, \mu).
\end{equation} 
The {\it maximum likelihood estimate} (MLE) is the value $\hat{\mu}_N$ which maximizes the log-likelihood function $\ell$ from~(\ref{eq:LL}). If we had only one random variable $X$ observed, then it would not be so easy to solve for MLE. But knowledge of $W$ gives us critical additional information, so the solution is straightforward and explicit, below in~(\ref{eq:MLE}). Such MLE exists and is unique, see \cite[Section 1, formula (1)]{Han} or \cite[Lemma 1, Remark 1]{Weinstein}:
\begin{equation}
\label{eq:MLE}
\hat{\mu}_n = \frac{X_1/W_1 + \ldots + X_n/W_n}{1/W_1 + \ldots + 1/W_n}.
\end{equation}

\begin{lemma} Under Assumptions~\ref{asmp:basic} and~\ref{asmp:iid}, if $Z \sim \mathcal N(0, 1)$, the conditional distribution of $\hat{\mu}_n$ is 
\begin{equation}
\label{eq:conditional}
\hat{\mu}_n \mid W_1, \ldots, W_n \sim \mathcal N(\mu, S(n)^{-1}),
\end{equation}
with $S(n)$ given~(\ref{eq:final-repr}). 
\label{lemma:tech}
\end{lemma}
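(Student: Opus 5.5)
Presumably $S(n) := 1/W_1 + \cdots + 1/W_n$; the referenced display~(\ref{eq:final-repr}) is not yet visible, and we assume it is this sum. The plan is a direct computation. Substitute the representation $X_i = \mu + \sqrt{W_i}Z_i$ from~(\ref{eq:repr}) into the explicit formula~(\ref{eq:MLE}). Since $\sum_i \mu/W_i = \mu S(n)$, this gives
$$
\hat{\mu}_n - \mu = \frac{1}{S(n)}\sum_{i=1}^n \frac{Z_i}{\sqrt{W_i}}.
$$

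Next, condition on $W_1, \ldots, W_n$. Independence is what makes this step work. Under Assumption~\ref{asmp:iid} the pairs $(X_i, W_i)$, equivalently $(Z_i, W_i)$, are IID across $i$. Within each pair, $Z_i$ is independent of $W_i$: this is how the joint density~(\ref{eq:joint}) factors, since the conditional law of $X_i$ given $W_i$ is $\mathcal N(\mu, W_i)$, so $Z_i = (X_i-\mu)/\sqrt{W_i}$ is standard normal given $W_i$, regardless of $W_i$. Hence, conditionally on $(W_1,\ldots,W_n)$, the variables $Z_1,\ldots,Z_n$ are IID $\mathcal N(0,1)$.

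A linear combination $\sum_i c_i Z_i$ with deterministic coefficients $c_i = S(n)^{-1}W_i^{-1/2}$ is then Gaussian with mean zero. Its variance is
$$
\sum_i c_i^2 = S(n)^{-2}\sum_i W_i^{-1} = S(n)^{-1}.
$$
This gives~(\ref{eq:conditional}). If one wants to be rigorous about conditional laws, the cleanest form is the conditional characteristic function $\mathbb E[e^{it(\hat\mu_n-\mu)}\mid W_1,\ldots,W_n] = \exp(-t^2/(2S(n)))$. It factors over $i$ by the conditional independence just established.

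The only point needing care, the main obstacle such as it is, is justifying that independence. In particular, we must check that the joint conditional law of $(Z_1,\ldots,Z_n)$ given all of $W_1,\ldots,W_n$ (not just $Z_i$ given $W_i$) is the product of standard normals. This follows from the product structure of the joint density $\prod_i g(x_i,w_i;\mu)$: dividing by $\prod_i g_W(w_i)$ and changing variables $z_i = (x_i-\mu)/\sqrt{w_i}$, which has Jacobian $\prod_i \sqrt{w_i}$, yields $\prod_i g_Z(z_i)$. The division is legitimate because $g_W>0$ on $(0,\infty)$, as required in Assumption~\ref{asmp:basic}.
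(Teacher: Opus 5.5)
Your proposal is correct and follows the paper's proof. Like the paper, you substitute $X_i = \mu + \sqrt{W_i}Z_i$ into the explicit formula for $\hat{\mu}_n$, reduce to $\hat{\mu}_n - \mu = S(n)^{-1}\sum_{i} Z_i/\sqrt{W_i}$, and read off the conditional Gaussian variance $S(n)^{-2}\sum_i W_i^{-1} = S(n)^{-1}$. Your extra argument that $Z_1,\ldots,Z_n$ are conditionally IID standard normal given $W_1,\ldots,W_n$ (product density plus change of variables) is a step the paper takes for granted, and it is sound.
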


\begin{proof}
Apply~(\ref{eq:repr}) to each $X_i$:  We can represent each observation $X_i$ as 
\begin{equation}
\label{eq:X-W-i}
X_i = \mu + \sqrt{W_i}Z_i,\quad i = 1, \ldots, n.
\end{equation}
Plugging~(\ref{eq:X-W-i}) into the first equation in~(\ref{eq:MLE}), we get: 
\begin{align}
\label{eq:big}
\begin{split}
\hat{\mu}_n &= \frac{\mu/W_1 + \ldots + \mu/W_n}{1/W_1 + \ldots + 1/W_n}  \\ & + 
\frac{Z_1/\sqrt{W_1} + \ldots + Z_n/\sqrt{W_n}}{1/W_1 + \ldots + 1/W_n}.
\end{split}
\end{align}
The first term in the right-hand side of~(\ref{eq:big}) is equal to $\mu$, after everything else in the numerator and the denominator cancels out. The second term, conditional on the values of $W_1, \ldots, W_n$, is normally distributed with mean zero and variance 
\begin{align*}
&\frac{(1/\sqrt{W_1})^2 + \ldots + (1/\sqrt{W_n})^2}{(1/W_1 + \ldots + 1/W_n)^2} \\ & = \frac1{1/W_1 + \ldots + 1/W_n} = \frac1{S(n)}.
\end{align*}
This completes the proof of~(\ref{eq:conditional}). Thus we get~(\ref{eq:final-repr}).
\end{proof}

This obviates the need for asymptotic results as $n \to \infty$. We can compute confidence intervals or test hypotheses without these asymptotic results. But it is still interesting to consider them, because these give us exact asymptotics faster than 
$\sqrt{n}$. 

\subsection{The main result} Define the cumulative distribution function (CDF) of the random variable $W$ (equivalently, the distribution $Q$):
$$
G(u) = \int_0^ug_W(w)\,\mathrm{d}w = \mathbb P(W \le u),\quad u \in [0, \infty).
$$
Then the tail function of $1/W$ is $\mathbb P(1/W \ge u) = G(1/u)$ for $u > 0$. Using a well-known expression for the mean of a positive random variable, we get: 
\begin{equation}
\label{eq:expectation}
\mathbb E[1/W] = \int_0^{\infty}G(1/u)\,\mathrm{d}u.
\end{equation}
The function $G$ is strictly increasing on $[0, \infty)$, and its range is $[0, 1)$. Therefore, we can define its inverse $H : [0, 1) \to [0, \infty)$ as a strictly increasing function such that $H(G(u)) \equiv u$. Also, define $B(t) = 1/H(1/t)$, and let 
\begin{equation}
\label{eq:A-def}
A(t) = t\int_0^{B(t)}G(1/u)\,\mathrm{d}u,\quad t > 0,
\end{equation}

\begin{assumption}
The function $G$ is regularly varying at zero with index $\beta > 0$.  
\label{asmp:RV}
\end{assumption}

\begin{assumption}
We have $\mathbb E[1/W] = \infty$.
\label{asmp:infty}
\end{assumption}

In particular, if the density $g$ is regularly varying at zero with index $\beta - 1$, then the CDF $G$ is regularly varying at zero with index $\beta$, \cite[Appendix 6, Theorem 2.1(iv)]{Borovkov}. 

\begin{lemma} Under Assumption~\ref{asmp:RV}: 

\textsc{(A)} If $\beta > 1$, then Assumption~\ref{asmp:infty} does not hold. If $\beta < 1$, then Assumption~\ref{asmp:infty} holds. For the case $\beta = 1$, we cannot conclude either way. 

\textsc{(B)} The function $B$ is regularly varying with index $1/\beta$ at infinity. If $\beta = 1$, then the function $A$ is regularly varying with index 1 at infinity.

\textsc{(C)} If $\beta = 1$, then $A(t)/B(t) \to \infty$ as $t \to \infty$.
\label{lemma:beta}
\end{lemma}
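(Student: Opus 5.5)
For (A), write $G(v) = v^\beta L(v)$ as $v \to 0$, with $L$ slowly varying at zero. Then $u \mapsto G(1/u) = u^{-\beta}L(1/u)$ is regularly varying at infinity with index $-\beta$. By~(\ref{eq:expectation}), finiteness of $\mathbb E[1/W]$ is decided by the tail of $\int^\infty G(1/u)\,\mathrm{d}u$, since $G \le 1$ makes the part over $[0,1]$ finite. By Potter's bounds, for any $\varepsilon > 0$ and large $u$ we have $u^{-\beta-\varepsilon} \le G(1/u) \le u^{-\beta+\varepsilon}$. Taking $\beta > 1$ and $\varepsilon < \beta - 1$ gives convergence, so Assumption~\ref{asmp:infty} fails. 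Taking $\beta < 1$ and $\varepsilon < 1-\beta$ gives divergence, so the assumption holds. For $\beta = 1$, the two examples $L \equiv 1$ (divergent harmonic-type integral) and $L(v) = \ln^{-2}(1/v)$ (convergent) show that no conclusion is possible; this is the claim, and I would cite these examples.

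For (B), since $G$ is regularly varying at zero with index $\beta$, its inverse $H$ on $[0,1)$ is regularly varying at zero with index $1/\beta$. This is the standard inversion theorem for regularly varying functions, e.g. Bingham--Goldie--Teugels Theorem 1.5.12, applied after the substitution $v \mapsto 1/v$. Then $B(t) = 1/H(1/t)$, and $t \mapsto H(1/t)$ is regularly varying at infinity with index $-1/\beta$. Hence $B$ is regularly varying at infinity with index $1/\beta$.

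For $A$ with $\beta = 1$, put $I(x) := \int_0^x G(1/u)\,\mathrm{d}u$. The integrand is regularly varying with index $-1$, so Karamata's theorem in its index $-1$ form (BGT Proposition 1.5.9a) shows that $I$ is slowly varying at infinity. Moreover $I(x)/\bigl(xG(1/x)\bigr) \to \infty$ whenever $I(x) \to \infty$; in the case $I(\infty) < \infty$, $I$ is trivially slowly varying. Since $B$ is regularly varying with positive index 1 and $B(t) \to \infty$, the composition $I \circ B$ is slowly varying. Therefore $A(t) = t\,I(B(t))$ is regularly varying with index 1.

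For (C), with $\beta = 1$ the aim is $A(t)/B(t) = tI(B(t))/B(t) \to \infty$. The key identity is $G(1/B(t)) = G(H(1/t)) = 1/t$, so $t = 1/G(1/B(t))$. Substituting $x = B(t)$ turns the ratio into $I(x)/\bigl(xG(1/x)\bigr)$, and this tends to infinity by the index $-1$ Karamata statement above. That statement holds in all cases: when $I$ diverges it is Proposition 1.5.9a, and when $I$ converges it is immediate, because $xG(1/x) \to 0$ (the integrand is $o(1/u)$ by integrability plus regular variation) while $I(x)$ increases to a positive limit.

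The main obstacle is stating the $\beta = 1$ Karamata result carefully enough to cover both the convergent and divergent cases of $I$. The inversion step for $H$ is standard but needs monotonicity, and this is guaranteed because $G$ is strictly increasing and continuous.
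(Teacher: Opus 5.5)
Your proposal is correct and follows the paper's proof essentially step for step: regular-variation inversion of $G$ for $B$, Karamata's index $-1$ theorem plus composition with $B$ for $A$, and the identity $G(1/B(t))=1/t$ to reduce $A(t)/B(t)$ to $I(x)/(xG(1/x))$ in (C). The differences are minor. You use Potter-type bounds instead of Karamata's theorem in (A), and you explicitly handle the subcase $\int_0^\infty G(1/u)\,\mathrm{d}u<\infty$ in (B)--(C), which the paper leaves implicit even though those parts do not assume Assumption~\ref{asmp:infty}.
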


\begin{proof} \textsc{(A)} We use~\eqref{eq:expectation}. By the properties of regularly varying functions, see \cite[Appendix 6, Theorem 2.1(iv)]{Borovkov} $u \mapsto G(1/u)$ is regularly varying at infinity with index $-\beta$. Therefore, the function $u \mapsto \int_0^uG(1/v)\,\mathrm{d}v$ is regularly varying at infinity with index $1 - \beta$, if only $\beta \ne 1$. Such function converges to infinity at infinity if $1 - \beta > 0$, and is bounded at infinity if $1 - \beta < 0$. The latter statement can be derived easily from \cite[Appendix 6, Theorem 2.1(ii)]{Borovkov}. This completes the proof of Lemma~\ref{lemma:beta}. 

Finally, consider the case $\beta = 1$. If $G(u) = u$ for $0 \le u \le 1$ (with uniform $W$ on $[0, 1]$), this gives us $\mathbb E[1/W] = \infty$. If $G(u) = u\ln^{\gamma}u$ for $0 \le u \le c$ (if $c > 0$ is small enough and $\gamma > 1$) this gives us $\mathbb E[1/W] < \infty$. 

\textsc{(B)} We apply \cite[Appendix 6, Theorem 2.1(v)]{Borovkov} to get that the inverse function $H(1/t)$ is regularly varying with index $-1/\beta$. Therefore, $B(t) = 1/H(1/t)$ is regularly varying with index $1/\beta$. The statement for the function $A$ follows from successive applications of classic results on regularly varying functions from the same monograph: \cite[Appendix 6, Theorem 2.1(iv), (v)]{Borovkov}. The function $G(1/u)$ is regularly varying at infinity with index $-1$. Therefore, the function 
$$
K: t \mapsto \int_0^tG(1/u)\,\mathrm{d}u
$$
is slowly varying at infinity. But the function $B$ is regularly varying at infinity with index $1$. Thus, the function $t \mapsto K(B(t))$ is also slowly varying at infinity. Finally, the function $A(t) = tK(B(t))$ is regularly varying at infinity with index 1. 

\textsc{(C)} Apply \cite[Appendix 6, Theorem 2.1(iv), (2.5)]{Borovkov} to the function $V(s) = G(1/s)$ (regularly varying at infinity with index $-1$), we get: 
\begin{equation}
\label{eq:L-def}
L(t) := \frac1{tV(t)}\int_0^tV(s)\,\mathrm{d}s \to \infty,\quad t \to \infty,
\end{equation}
Rewrite~(\ref{eq:L-def}) as follows:
\begin{equation}
\label{eq:L-int}
\int_0^tG(1/s)\,\mathrm{d}s = tV(t)L(V(t)) = tG(1/t)L(t).
\end{equation}
Plugging~(\ref{eq:L-int}) into~(\ref{eq:A-def}), and using 
$$
G(1/B(t)) = G(H(1/t)) = 1/t,
$$
we get the asymptotics:
\begin{align*}
\frac{A(t)}{B(t)} &= \frac{t}{B(t)}\int_0^{B(t)}G(1/s)\,\mathrm{d}s \\ & = \frac{t}{B(t)}B(t)G(1/B(t))L(B(t)) = L(B(t)) \to \infty.
\end{align*}
\end{proof}

For $\beta \in (0, 1)$, define $U_{\beta} > 0$ to be the stable subordinator with index $\beta$,  (see \cite[Chapter V, Theorem 3.5]{Steutel} or any other standard reference):  
$$
\mathbb E[e^{-\lambda U_{\beta}}] = \exp(-\Gamma(1 - \beta)\lambda^{\beta}),\quad \lambda > 0.
$$
We say that $f(t) \sim g(t)$ as $t \to a$ for $a = 0, \infty$, if $f(t)/g(t) \to 1$ as $t \to a$. 

\begin{definition} A function $f : (0, \infty) \to (0, \infty)$ is called {\it regularly  varying at} $a$ with {\it index} $b$, where $a = 0$ or $a = \infty$, if for any $s > 0$ we have: 
$$
\lim\limits_{t \to a}\frac{f(st)}{f(t)} = s^b\quad \mbox{for all}\quad s > 0.
$$
For the case $b = 0$, we say $f$ is {\it slowly varying at} $a$. 
\end{definition}

Below is the main result of this short note. 

\begin{theorem} Impose Assumptions~\ref{asmp:basic},~\ref{asmp:iid},~\ref{asmp:RV}. Let $Z \sim \mathcal N(0, 1)$. Then the MLE $\hat{\mu}_n$ satisfies:

\textsc{(A)} If $\beta \in (0, 1)$, take a random variable $V \sim \mathcal N(0, 1)$ independent of $U_{\beta}$. Then
\begin{equation}
\label{eq:beta<1}
\sqrt{B(n)}\cdot(\hat{\mu}_n - \mu) \stackrel{d}{\to} U_{\beta}V,\quad n \to \infty,
\end{equation}

\textsc{(B)} If $\beta = 1$, then under Assumption~\ref{asmp:infty},
\begin{equation}
\label{eq:beta=1}
\sqrt{A(n)}\cdot(\hat{\mu}_n - \mu) \stackrel{d}{\to} V,\quad n \to \infty.
\end{equation}
\label{thm:main}
\end{theorem}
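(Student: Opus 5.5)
By Lemma~\ref{lemma:tech}, conditionally on $W_1,\ldots,W_n$ we have $\hat{\mu}_n-\mu \sim \mathcal N(0,S(n)^{-1})$, where $S(n)=1/W_1+\ldots+1/W_n$. So we can write $\hat{\mu}_n-\mu = V\,S(n)^{-1/2}$ with $V\sim\mathcal N(0,1)$ independent of $(W_1,\ldots,W_n)$. The whole theorem therefore reduces to a limit theorem for the sum $S(n)$ of IID positive variables $Y_i:=1/W_i$. Their tail is $\mathbb P(Y>u)=G(1/u)$, which by Assumption~\ref{asmp:RV} is regularly varying at infinity with index $-\beta$. The normalization $B(n)$ is exactly the one with $n\,\mathbb P(Y>B(n))=nG(H(1/n))=1$. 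Once a limit $S(n)/c_n\to \xi$ in distribution is established, independence of $V$ and the $W_i$ together with the continuous mapping theorem applied to the pair $(V,S(n)/c_n)$ gives $\sqrt{c_n}(\hat{\mu}_n-\mu)=V\,(S(n)/c_n)^{-1/2}\to V\xi^{-1/2}$.

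\textbf{Case (A), $\beta\in(0,1)$.} I will invoke the classical domain-of-attraction theorem for positive summands with regularly varying tail of index $-\beta<0$. The quickest route is via Laplace transforms. Write
$$
-\ln \mathbb E\big[e^{-\lambda S(n)/B(n)}\big] \approx n\,\mathbb E\big[1-e^{-\lambda Y/B(n)}\big]= n\int_0^\infty \frac{\lambda}{B(n)} e^{-\lambda u/B(n)}\,G(1/u)\,\mathrm{d}u .
$$
Substituting $u=B(n)s$ and using $nG(1/(B(n)s))\to s^{-\beta}$, with Potter bounds to dominate the integrand near $0$ and $\infty$, this tends to $\lambda\int_0^\infty e^{-\lambda s}s^{-\beta}\,\mathrm{d}s=\Gamma(1-\beta)\lambda^\beta$. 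Hence $S(n)/B(n)\to U_\beta$. Consequently $\sqrt{B(n)}(\hat{\mu}_n-\mu)\to V\,U_\beta^{-1/2}$. I expect that the limit written in~\eqref{eq:beta<1} must be read in this way, i.e.\ with $U_\beta^{-1/2}$ in place of $U_\beta$. I will check this normalization carefully against the statement, since the scaling argument forces the power $-1/2$ of the subordinator.

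\textbf{Case (B), $\beta=1$.} Here I aim for relative stability, $S(n)/A(n)\to 1$ in probability, which gives~\eqref{eq:beta=1} with limit $V$. The plan is truncation at level $b_n=\varepsilon A(n)$, using $K(t)=\int_0^tG(1/u)\,\mathrm{d}u=\mathbb E[Y\wedge t]$, which is slowly varying by Lemma~\ref{lemma:beta}(B) and tends to infinity by Assumption~\ref{asmp:infty}.

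1. Since $A(n)/B(n)\to\infty$ by Lemma~\ref{lemma:beta}(C), regular variation gives $n\,G(1/b_n)\sim B(n)/b_n\to0$. Hence with probability tending to one no $Y_i$ exceeds $b_n$, and $S(n)$ coincides with the truncated sum $S'(n)=\sum_i Y_i\wedge b_n$.

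2. The mean is $\mathbb E S'(n)=nK(\varepsilon A(n))$. The variance is at most $n\,\mathbb E[(Y\wedge b_n)^2]\le n b_n K(b_n)$, so $\mathrm{Var}\, S'(n)/(\mathbb E S'(n))^2\le \varepsilon A(n)/(nK(b_n))$.

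3. The main obstacle is showing $nK(\varepsilon A(n))\sim A(n)=nK(B(n))$, i.e.\ $K(\varepsilon A(n))/K(B(n))\to1$. Slow variation alone only handles bounded ratios of the arguments, whereas here $A(n)/B(n)=L(B(n))$ is unbounded. I would try to exploit the structure $A(n)=B(n)L(B(n))$. Since $K'(t)=G(1/t)$ and $L(t)=K(t)/(tG(1/t))$, we get
$$
\ln\frac{K(tL(t))}{K(t)}=\int_t^{tL(t)}\frac{\mathrm{d}s}{s\,L(s)},
$$
and by uniform convergence of slowly varying functions this is roughly $\ln L(t)/L(t)\to0$. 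If that works, steps 1--3 give $S(n)/A(n)\to1$ by Chebyshev's inequality, letting $\varepsilon\to0$ along a suitable sequence. If it fails, I would instead choose the truncation level $b_n$ between $B(n)$ and $A(n)$, for instance $b_n=B(n)\sqrt{L(B(n))}$, and redo steps 1--2 at that level.
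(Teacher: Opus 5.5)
Your argument is correct, and your reading of part (A) is right. Since $\hat\mu_n-\mu=V/\sqrt{S(n)}$ and $S(n)/B(n)\to U_\beta$, the limit in \eqref{eq:beta<1} must be $U_\beta^{-1/2}V$. The paper's proof makes exactly the same ``combining'' step, so the displayed $U_\beta V$ is an error, and not a harmless one. Indeed $\mathbb E|U_\beta V|^{\beta}=\infty$, whereas $U_\beta^{-1/2}V$ has moments of all orders because the negative moments of $U_\beta$ are finite.

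Where you differ from the paper is in how $S(n)$ is treated. The paper gets \eqref{eq:main-A} and \eqref{eq:main-B} by quoting Borovkov's one-sided stable limit theorem. For $\beta<1$ it applies it directly. For $\beta=1$ it uses the centred form $S(n)/B(n)-A_n\to\zeta$ with $A_n\sim A(n)/B(n)\to\infty$, then divides by $A_n$. You prove (A) from scratch via Laplace transforms and Potter bounds. For (B) you prove only the relative stability $S(n)/A(n)\to1$ that is actually needed, by the classical truncation-plus-Chebyshev argument for a weak law with non-integrable summands. Your route is elementary and self-contained, and it avoids matching centring constants with the reference. The paper's route is shorter and also yields the fluctuation limit $\zeta$, which the theorem does not use.

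The one step you left heuristic, step 3, needs a different justification from the one you give. The uniform convergence theorem controls $K(\lambda t)/K(t)$ only for $\lambda$ in compact sets, whereas here $\lambda$ runs up to $L(t)\to\infty$. Potter's bound closes the gap. Let $\phi(s):=sG(1/s)$, which is slowly varying. For fixed $\delta\in(0,1)$ there are $M,t_0$ with $\phi(s)\le M\phi(t)(s/t)^{\delta}$ for $s\ge t\ge t_0$. Since $K(t)=\phi(t)L(t)$,
\[
0\le\frac{K(tL(t))-K(t)}{K(t)}=\frac{1}{\phi(t)L(t)}\int_t^{tL(t)}\frac{\phi(s)}{s}\,\mathrm{d}s\le\frac{M}{\delta}\,L(t)^{\delta-1}\to0 .
\]
As $K$ is nondecreasing and $\varepsilon L(t)\ge1$ eventually, $K(t)\le K(\varepsilon tL(t))\le K(tL(t))$. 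At $t=B(n)$ this gives $K(\varepsilon A(n))/K(B(n))\to1$. Together with your steps 1--2, it yields $\limsup_n\mathbb P(|S(n)/A(n)-1|>2\eta)\le\varepsilon/\eta^2$ for every $\varepsilon\in(0,1]$, hence $S(n)/A(n)\to1$ in probability.

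Your fallback truncation at $B(n)\sqrt{L(B(n))}$ would need the same kind of estimate, so it is no way around this. With the bound above it is unnecessary.
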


The proof is postponed until the Appendix, because it is complicated. 

\section{The General Case}

If $Z$ is not necessarily normal, we do not have any exponential family of distributions any more. What is more, it is hard to find the maximum likelihood estimate explicitly. Instead, we shall recycle the estimate~\eqref{eq:MLE}, although it does not necessarily coincide with the maximum likelihood estimate any more. The reason why we use it is because this is the weighted least squares (WLS) estimate: The one which minimizes the objective function
\begin{equation}
\label{eq:objective}
L(\mu) = \sum\limits_{i=1}^n\frac{(X_i - \mu)^2}{W_i}. 
\end{equation}
It is a standard exercise to check that the $\hat{\mu}$ from~(\ref{eq:MLE}) minimizes~(\ref{eq:objective}).  We do not know the distribution of~(\ref{eq:MLE}) any more in the exact form, as in Lemma~\ref{lemma:tech} for the Gaussian case. But we do have an asymptotic result for this estimate, albeit weaker than for the Gaussian case. 

\begin{theorem} Impose Assumptions~\ref{asmp:basic},~\ref{asmp:iid},~\ref{asmp:RV}. Then the WLS estimate $\hat{\mu}_n$ from~\eqref{eq:MLE} satisfies:

\textsc{(A)} If $\beta \in (0, 1)$, then the following sequence is tight:
$$
\{\sqrt{B(n)}\cdot(\hat{\mu}_n - \mu),\quad n = 1, 2, \ldots\}
$$

\textsc{(B)} If $\beta = 1$, then the following sequence is tight: 
$$
\{\sqrt{A(n)}\cdot(\hat{\mu}_n - \mu),\quad n = 1, 2, \ldots\}
$$
\label{thm:general}
\end{theorem}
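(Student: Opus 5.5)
We work throughout from the decomposition~(\ref{eq:big}), which holds for general $Z$:
$$
\hat{\mu}_n - \mu = \frac{T(n)}{S(n)},\qquad T(n) := \sum_{i=1}^n \frac{Z_i}{\sqrt{W_i}},\qquad S(n) := \sum_{i=1}^n \frac1{W_i}.
$$
The variables $Z_1, \ldots, Z_n$ are independent of $W_1, \ldots, W_n$, with mean zero and variance one. Conditioning on $\mathcal W := \sigma(W_1, \ldots, W_n)$ therefore gives $\mathbb E[T(n)\mid \mathcal W] = 0$ and $\mathbb E[T(n)^2 \mid \mathcal W] = S(n)$. Write $c_n = B(n)$ in case (A) and $c_n = A(n)$ in case (B). The goal is tightness of $\sqrt{c_n}\,T(n)/S(n)$, and we replace the Gaussian computation of Lemma~\ref{lemma:tech} by a conditional Chebyshev bound.

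For $M, \delta > 0$ we split on the event $\{S(n) \ge \delta c_n\}$:
$$
\mathbb P\left(\sqrt{c_n}\,\frac{|T(n)|}{S(n)} > M\right) \le \mathbb P\bigl(S(n) < \delta c_n\bigr) + \mathbb E\left[\mathbb P\left(|T(n)| > \frac{M S(n)}{\sqrt{c_n}} \,\Big|\, \mathcal W\right)1_{\{S(n)\ge\delta c_n\}}\right].
$$
By conditional Chebyshev, the inner probability is at most $\dfrac{c_n S(n)}{M^2 S(n)^2} = \dfrac{c_n}{M^2 S(n)}$, and on the event this is at most $1/(\delta M^2)$. It therefore suffices to show that the sequence $c_n/S(n)$ is tight, i.e.
$$
\lim_{\delta\downarrow 0}\ \sup_n\ \mathbb P\bigl(S(n) < \delta c_n\bigr) = 0 .
$$
We first choose $\delta$ so that this term is at most $\varepsilon/2$, and then choose $M$ so that $1/(\delta M^2) \le \varepsilon/2$. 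Finitely many initial $n$ pose no problem, since each individual variable is a.s.\ finite.

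The crux is this lower tightness of $S(n)/c_n$, and it concerns only the $W$'s, so $Z$ plays no further role. This is exactly the ingredient the Gaussian proof of Theorem~\ref{thm:main} needs. There one shows $S(n)/B(n) \stackrel{d}{\to} U_\beta$ for $\beta < 1$ (with limit $U_\beta > 0$ a.s.) and $S(n)/A(n) \to 1$ in probability for $\beta = 1$. Since Theorem~\ref{thm:main} is proved only in the Appendix, I would rederive these directly.

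\textbf{Case (A).} The tail is $\mathbb P(1/W \ge u) = G(1/u)$, which is regularly varying with index $-\beta$, $\beta\in(0,1)$. The normalizing constant $B(n)$ satisfies $n\,G(1/B(n)) = 1$. The classical stable limit theorem for nonnegative heavy-tailed summands then gives $S(n)/B(n) \stackrel{d}{\to} U_\beta$. A limit in distribution that is a.s.\ positive yields $\sup_n \mathbb P(S(n) < \delta B(n)) \to 0$ as $\delta \downarrow 0$.

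If I want to avoid quoting the limit theorem, I would use Laplace transforms instead. For fixed $\lambda$,
$$
\mathbb E\bigl[e^{-\lambda S(n)/B(n)}\bigr] = \bigl(1 - \mathbb E[1 - e^{-\lambda/(W B(n))}]\bigr)^n .
$$
By Karamata's Tauberian theorem, $\mathbb E[1 - e^{-\lambda/(W B(n))}] \sim \Gamma(1-\beta)\lambda^\beta G(1/B(n)) = \Gamma(1-\beta)\lambda^\beta/n$. Then the bound $\mathbb P(S(n) < \delta B(n)) \le e^{\lambda\delta}\,\mathbb E[e^{-\lambda S(n)/B(n)}]$ is small once $\lambda$ is large and $\delta = 1/\lambda$.

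\textbf{Case (B).} This is the step I expect to be the main obstacle. Here we truncate: set $S'(n) = \sum_i W_i^{-1}1_{\{W_i^{-1} \le B(n)\}} \le S(n)$. Then $\mathbb E S'(n) \approx n\int_0^{B(n)} G(1/u)\,\mathrm{d}u = A(n)$, up to a correction $n B(n) G(1/B(n)) = B(n)$, which is $o(A(n))$ by Lemma~\ref{lemma:beta}(C). Moreover,
$$
\mathrm{Var}\, S'(n) \le n\,\mathbb E\bigl[W^{-2}1_{\{W^{-1}\le B(n)\}}\bigr] \le 2n\int_0^{B(n)} u\,G(1/u)\,\mathrm{d}u \sim n\,B(n)^2 G(1/B(n)) = B(n)^2 ,
$$
where the asymptotic uses Karamata with index $-1+1 = 0 > -1$. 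Since $B(n) = o(A(n))$ by Lemma~\ref{lemma:beta}(C), Chebyshev gives $S'(n)/A(n) \to 1$ in probability. Hence $\mathbb P(S(n) < A(n)/2) \to 0$, which establishes tightness and completes (B).
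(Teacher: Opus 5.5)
Your proof is correct, and its skeleton matches the paper's. You condition on $W_1,\ldots,W_n$, use $\mathbb E[(\hat\mu_n-\mu)^2\mid W_1,\ldots,W_n]=1/S(n)$ with conditional Chebyshev, and reduce everything to a lower bound on $S(n)$ at scale $c_n$.

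The differences are in the packaging and in where that lower bound comes from.

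\textbf{Packaging.} The paper fixes an arbitrary $C(n)=o(c_n)$ and shows $\sqrt{C(n)}(\hat\mu_n-\mu)\to 0$ in probability. It then leaves implicit the (true) fact that this, holding for every such $C(n)$, forces tightness. You prove tightness of $c_n/S(n)$ directly. This is cleaner. It also avoids the paper's splitting inequality, which mixes a conditional probability with the unconditional $\mathbb P(C(n)/S(n)>\delta)$.

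\textbf{Source of the lower bound.} The paper imports $S(n)/B(n)\stackrel{d}{\to}U_\beta$ and $S(n)/A(n)\to 1$ from Borovkov's stable limit theorem (Section 6). You derive what you need by hand:
\begin{itemize}
\item for $\beta<1$, an Abelian Laplace-transform estimate plus a Chernoff bound;
\item for $\beta=1$, a truncation at level $B(n)$ with $\mathbb E S'(n)=A(n)-B(n)$ and $\mathrm{Var}\,S'(n)=O(B(n)^2)$, closed by Lemma~\ref{lemma:beta}(C).
\end{itemize}
The truncation argument is more elementary and yields exactly the one-sided bound required, since $S(n)\ge S'(n)$. It also makes clear that Assumption~\ref{asmp:infty} plays no role in part (B), consistent with the hypotheses of Theorem~\ref{thm:general}. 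The paper's route delivers full limit laws; Theorem~\ref{thm:main} needs these, but this tightness statement does not.

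One harmless slip: Karamata gives $2n\int_0^{B(n)}uG(1/u)\,\mathrm{d}u\sim 2B(n)^2$, not $B(n)^2$.
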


\begin{proof} The computation similar to the one in Lemma~\ref{lemma:tech} shows that conditional mean and variance of 
$$
\hat{\mu}_n\mid W_1, \ldots, W_n
$$
are $\mu$ and $S(n)^{-1}$, using the notation for the $S(n)$ from~(\ref{eq:final-repr}). Therefore, 
$$
\mathbb E\left[(\hat{\mu}_n - \mu)^2\mid W_1, W_2,\ldots\right] = \frac1{S(n)}.
$$
Since $S(n)$ depends only on $W_1, W_2, \ldots$, we can rewrite 
$$
\mathrm{Var}(\sqrt{S(n)}\hat{\mu}_n\mid W_1, W_2, \ldots) = 1.
$$
By Chebyshev's inequality, for any $\varepsilon > 0$ we get:
\begin{equation}
\label{eq:est-1}
\mathbb P\left(\sqrt{S(n)}|\hat{\mu}_n - \mu| \ge \varepsilon\mid W_1, W_2, \ldots\right) \le \frac{1}{\varepsilon^2}.
\end{equation}
Now take any sequence $C(n)$ of positive numbers such that, as $n \to \infty$, 
\begin{equation}
\label{eq:choice}
\frac{C(n)}{B(n)} \to 0,\quad \beta \in (0, 1);\quad \frac{C(n)}{A(n)} \to 0,\quad \beta = 1.
\end{equation}
 Using the results of~\eqref{eq:main-A} and~\eqref{eq:main-B}, we get 
$C(n)/S(n) \stackrel{d}{\to} 0$ as $n \to \infty$. Therefore, for any $\delta > 0$ we have:
\begin{equation}
\label{eq:est-2}
\mathbb P\left(C(n)/S(n) > \delta\right) \to 0,\quad n \to \infty.
\end{equation}
Our goal is to study the probability
$$
\mathbb P\left(\sqrt{C(n)}|\hat{\mu}_n - \mu| \ge \varepsilon\mid W_1, W_2, \ldots\right)
$$
We can split this probability in two terms. Use~(\ref{eq:est-1}) for $\delta^{-1/2}\varepsilon$ instead of $\varepsilon$:
\begin{align*}
&\mathbb P\left(\sqrt{C(n)}|\hat{\mu}_n - \mu| \ge \varepsilon\mid W_1, W_2, \ldots\right) \\ & \le \mathbb P\left(\sqrt{S(n)}|\hat{\mu}_n - \mu| \ge \delta^{-1/2}\varepsilon\mid W_1, W_2, \ldots\right) \\ & + \mathbb P\left(C(n)/S(n) > \delta\right) 
\\ & \le \frac{\delta}{\varepsilon^2} + \mathbb P\left(C(n)/S(n) > \delta\right).
\end{align*}
Using~(\ref{eq:est-2}), and letting $n \to \infty$, we get:  
$$
\varlimsup\limits_{n \to \infty}\mathbb P\left(\sqrt{C(n)}|\hat{\mu}_n - \mu| \ge \varepsilon\mid W_1, W_2, \ldots\right) \le \frac{\delta}{\varepsilon^2}.
$$
Using that $\delta > 0$ is arbitrary, we can conclude
\begin{equation}
\label{eq:final-est}
\mathbb P\left(\sqrt{C(n)}|\hat{\mu}_n - \mu| \ge \varepsilon\mid W_1, W_2, \ldots\right) \to 0.
\end{equation}
Take the expectation in~(\ref{eq:final-est}). In the left-hand side, by the total probability formula, there will be the unconditional probability. However, this conditional probability is always between zero and one. Using Lebesgue dominated convergence theorem, we conclude that the unconditional probability tends to zero:
$$
\mathbb P\left(\sqrt{C(n)}|\hat{\mu}_n - \mu| \ge \varepsilon\right) \to 0,\quad n \to \infty.
$$
Since this holds for any $C(n)$ from~(\ref{eq:choice}), we complete the proof of Theorem~\ref{thm:general}. 
\end{proof}

Unfortunately, we do not have exact distribution of the limiting law or laws. This is a significant shortcoming, but a tightness result is better than nothing. For completeness, we present the result on Fisher information below.

\begin{lemma} If $g_Z$ is differentiable, and $\mathbb E[1/W] = \infty$, then the Fisher information is infinite.
\label{lemma:Fisher}
\end{lemma}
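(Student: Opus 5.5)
We compute the score of the joint density \eqref{eq:joint} directly and then use the representation \eqref{eq:repr} to separate the $W$-dependence from the $Z$-dependence. Since only $g_Z$ depends on $\mu$, differentiating $\ln g(x,w;\mu)$ gives
$$
s(x,w,\mu) = -\frac{1}{\sqrt{w}}\cdot\frac{g_Z'\bigl((x-\mu)/\sqrt{w}\bigr)}{g_Z\bigl((x-\mu)/\sqrt{w}\bigr)}.
$$
Substituting $X = \mu + \sqrt{W}Z$, with $Z$ independent of $W$, we get $s(X,W,\mu) = -W^{-1/2}\psi(Z)$, where $\psi := g_Z'/g_Z$. Therefore
$$
\Sigma = \mathbb E\bigl[s^2(X,W,\mu)\bigr] = \mathbb E[1/W]\cdot \mathbb E\bigl[\psi^2(Z)\bigr].
$$
The first step is to justify this factorization, which follows from independence and Tonelli's theorem since all integrands are nonnegative. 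This is consistent with the Gaussian computation $\Sigma = \mathbb E[1/W]$, where $\psi(z) = -z$ and $\mathbb E[\psi^2(Z)] = 1$.

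The second step is to show $\mathbb E[\psi^2(Z)] > 0$; then the product with $\mathbb E[1/W] = \infty$ equals $\infty$. If $\mathbb E[\psi^2(Z)] = 0$, then $g_Z' = 0$ almost everywhere with respect to the law of $Z$. Since $g_Z > 0$ everywhere, this means $g_Z' = 0$ Lebesgue-almost everywhere. From this we want to conclude that $g_Z$ is constant on $\mathbb R$, which contradicts integrability, since $\int g_Z = 1$ is impossible for a positive constant.

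The main obstacle is the passage from ``$g_Z' = 0$ a.e.'' to ``$g_Z$ constant''. For merely differentiable functions this can fail in general (Cantor-type examples), though here $g_Z$ is differentiable everywhere, not just a.e. My first choice is to use that a function differentiable everywhere with derivative zero almost everywhere is constant; this holds for everywhere-differentiable functions by a standard real-analysis result (Goldowsky--Tonelli type). Alternatively, to keep the argument elementary, I would read ``differentiable'' as continuously differentiable, or assume $g_Z$ absolutely continuous, in which case $g_Z(b) - g_Z(a) = \int_a^b g_Z' = 0$ immediately. In either case we obtain $\mathbb E[\psi^2(Z)] \in (0,\infty]$, hence $\Sigma = \infty$, which completes the plan. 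Note that we do not need $\mathbb E[\psi^2(Z)] < \infty$: an infinite value only makes the conclusion stronger.
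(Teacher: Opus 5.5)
Your proposal is correct and follows the same route as the paper. Both compute the score by the chain rule and substitute $X = \mu + \sqrt{W}Z$ to get $s^2 = W^{-1}\bigl((\ln g_Z)'(Z)\bigr)^2$, then factor the expectation by independence. Your additional argument that $\mathbb E\bigl[((\ln g_Z)'(Z))^2\bigr] > 0$ (an everywhere-differentiable $g_Z$ with $g_Z' = 0$ a.e.\ would be constant, contradicting $\int g_Z = 1$) supplies a step the paper leaves implicit, since without it the product $\mathbb E[1/W]\cdot \mathbb E\bigl[((\ln g_Z)'(Z))^2\bigr]$ could be of the form $\infty \cdot 0$.
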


\begin{proof}
Compute the score function
$$
s(x, w, \mu) = \frac{\partial \ln g(x, w; \mu)}{\partial \mu}.
$$
After taking logarithms in~(\ref{eq:joint}), we have a sum, and only the last term there depends on $\mu$:
$$
\ln g_Z\left(\frac{x - \mu}{\sqrt{w}}\right)
$$
Its derivative with respect to $\mu$ is written by the chain rule:
\begin{equation}
\label{eq:score-general}
s(x, w, \mu) = -\frac1{\sqrt{w}}(\ln g_Z)'\left(\frac{x - \mu}{\sqrt{w}}\right).
\end{equation}
Square this expression~(\ref{eq:score-general}), plug in $z = Z$ and $w = W$. Inside the argument of the function $(\ln g_Z)'$, we get: $(X - \mu)/\sqrt{W} = Z$, which is independent of $W$. Thus, 
\begin{equation}
\label{eq:score-square}
s^2(X, W, \mu) = -\frac1{\sqrt{W}}(\ln g_Z)'(Z).
\end{equation}
Take the expected value of~(\ref{eq:score-square}). Using independence of $W$ and $Z$, we get $\mathbb E[1/W]\cdot \mathbb E[(\ln g_Z)'(Z)^2] = \infty$.  
\end{proof}

Finally, we present the asymptotic normality result for the regular case, when $\mathbb E[1/W] < \infty$. It is quite elementary, but we feel the need to include this for clarity of exposition. 

\begin{lemma}
\label{lemma:reg}
Impose Assumptions~\ref{asmp:basic},~\ref{asmp:iid}, but suppose that Assumption~\ref{asmp:infty} does not hold. Then the WLS estimate $\hat{\mu}_n$ satisfies
$$
\sqrt{n}(\hat{\mu}_n - \mu) \stackrel{d}{\to} \mathcal N\left(0, (\mathbb E[1/W])^{-1}\right).
$$
\end{lemma}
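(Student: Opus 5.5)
We will write the WLS estimate from~(\ref{eq:MLE}) in the decomposed form~(\ref{eq:big}) obtained in the proof of Lemma~\ref{lemma:tech}; that computation uses only~(\ref{eq:X-W-i}) and not Gaussianity. Thus
$$
\sqrt{n}(\hat{\mu}_n - \mu) = \frac{n^{-1/2}\sum_{i=1}^n Z_i/\sqrt{W_i}}{n^{-1}\sum_{i=1}^n 1/W_i}.
$$
The plan is to treat the numerator with the classical central limit theorem and the denominator with the strong law of large numbers, and then combine them with Slutsky's lemma.

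For the denominator, since Assumption~\ref{asmp:infty} fails, we have $\mathbb E[1/W] < \infty$. The variables $1/W_i$ are IID, positive and integrable, so $n^{-1}\sum_{i=1}^n 1/W_i \to \mathbb E[1/W]$ almost surely. This limit is strictly positive (indeed finite and nonzero, since $W>0$).

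For the numerator, set $Y_i := Z_i/\sqrt{W_i}$. These are IID because the pairs $(Z_i, W_i)$ are IID and $Z_i$ is independent of $W_i$. By this independence and Assumption~\ref{asmp:basic}, we get $\mathbb E[Y_i] = \mathbb E[Z]\,\mathbb E[W^{-1/2}] = 0$. Here $\mathbb E[W^{-1/2}] \le 1 + \mathbb E[1/W] < \infty$, so the product is well defined. Similarly, $\mathbb E[Y_i^2] = \mathbb E[Z^2]\,\mathbb E[1/W] = \mathbb E[1/W] < \infty$. The Lindeberg--L\'evy central limit theorem then gives $n^{-1/2}\sum_{i=1}^n Y_i \stackrel{d}{\to} \mathcal N(0, \mathbb E[1/W])$.

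Finally, Slutsky's lemma yields convergence of the ratio to $\mathcal N(0, \mathbb E[1/W])/\mathbb E[1/W]$. This is $\mathcal N\left(0, (\mathbb E[1/W])^{-1}\right)$, as claimed. The argument has no real obstacle. The only point needing care is to check the finiteness of the moments $\mathbb E[W^{-1/2}]$ and $\mathbb E[1/W]$, and to note that independence of $Z$ and $W$ is what lets the variance factorize. This factorization is exactly where finiteness of $\mathbb E[1/W]$ (equivalently, by the Gaussian computation~(\ref{eq:F}), finite Fisher information) enters.
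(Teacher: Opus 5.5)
Your proposal is correct and follows essentially the same route as the paper: the decomposition from Lemma~\ref{lemma:tech}, the law of large numbers for $S_n/n$, the central limit theorem for $n^{-1/2}\sum_{i=1}^n Z_i/\sqrt{W_i}$ with variance $\mathbb E[1/W]$, and Slutsky's theorem. Your explicit check that $\mathbb E[W^{-1/2}] \le 1 + \mathbb E[1/W] < \infty$, so that the mean-zero factorization is well defined, is a small detail the paper leaves implicit.
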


\begin{proof}
As in the proof of Lemma~\ref{lemma:tech}, we can represent 
$$
\hat{\mu}_n - \mu = \frac1{S_n}\sum\limits_{i=1}^n\frac{Z_i}{\sqrt{W_i}}.
$$
By the Law of Large Numbers, 
\begin{equation}
\label{eq:LLN}
\frac{S_n}{n} \stackrel{d}{\to} \mathbb E[1/W].
\end{equation}
By independence of $Z_i$ and $W_i$, we get: 
\begin{align*}
\mathbb E(Z_i/\sqrt{W_i}) &= \mathbb E[Z_i]\cdot\mathbb E[1/\sqrt{W_i}] = 0; \\
\mathbb E(Z_i/\sqrt{W_i})^2 &= \mathbb E[Z_i^2]\cdot\mathbb E[W_i^{-1}] = \mathbb E[1/W].
\end{align*}
By the Central Limit Theorem, 
\begin{equation}
\label{eq:CLT}
\frac1{\sqrt{n}}\sum\limits_{i=1}^n\frac{Z_i}{\sqrt{W_i}} \stackrel{d}{\to} \mathcal N(0, \mathbb E[1/W]). 
\end{equation}
Combining~(\ref{eq:LLN}) and~(\ref{eq:CLT}), and using the Slutsky theorem, \cite[Theorem 5.5.17]{CasellaBerger}, we complete the proof of Lemma~\ref{lemma:reg}.
\end{proof}

\section{Convergence Rates} 

In this section, we play with possible convergence rates. This applies to both main results: Theorems~\ref{thm:main} and~\ref{thm:general}. 

\subsection{Two cases} Let us discuss special cases of Theorem~\ref{thm:main}. Various choices of $g_W$ and therefore $G$ lead to various convergence rates. First, consider the case $\beta < 1$. 

\begin{lemma} If $\beta \in (0, 1)$, take any regularly varying function $C(t) > 0$ at infinity with index $\alpha > 0.5$, and with $C' > 0$ everywhere. Then there exists a distribution $Q$ on $(0, \infty)$ such that $B(t) = C(t)$ in Theorem~\ref{thm:main}.
\end{lemma}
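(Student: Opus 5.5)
We will construct the distribution $Q$ by prescribing its CDF $G$ directly, reversing the chain of definitions $G \mapsto H \mapsto B$. Since $B(t) = 1/H(1/t)$ and $H$ is the inverse of $G$, the identity $B(t) = C(t)$ is equivalent to $H(1/t) = 1/C(t)$. Equivalently, $G(1/C(t)) = 1/t$ for all $t$ in the relevant range. This suggests setting
$$
G(u) := \frac{1}{C^{-1}(1/u)}
$$
for $u$ small, where $C^{-1}$ is the inverse of $C$. The inverse exists because $C' > 0$ makes $C$ strictly increasing. Since $C$ is regularly varying with positive index $\alpha$, we have $C(t) \to \infty$, so $C^{-1}$ is defined on a neighborhood of infinity and tends to infinity there.

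The first step is to check that this $G$ is a legitimate CDF near zero: positive, strictly increasing, continuous, and tending to $0$ as $u \downarrow 0$. Monotonicity follows from monotonicity of $C^{-1}$, and the limit from $C^{-1}(s) \to \infty$. Since $C$ is differentiable with $C' > 0$, the inverse $C^{-1}$ is differentiable, so $G$ has a positive continuous density $g_W = G'$ near zero, provided $C'$ is continuous. We will assume this, or otherwise smooth it.

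The second step is to extend $G$ from a small interval $(0, u_0]$ to all of $(0,\infty)$. Choose $u_0$ small so that $G(u_0) < 1$. Then glue on any continuous positive density on $(u_0, \infty)$ with total mass $1 - G(u_0)$, for example an exponential tail. If positivity and continuity of $g_W$ at $u_0$ are required by Assumption~\ref{asmp:basic}, we match the density value at $u_0$; a rescaled exponential piece can do this. This gluing step requires some care but is routine. The identity $B(t) = C(t)$ then holds for all sufficiently large $t$, which suffices because Theorem~\ref{thm:main} is asymptotic. If the statement is read as exact equality for all $t$, one would need the range of $C$ to be exactly $(0, \infty)$. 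In that case we would define $G$ globally by the formula, using $C(0+) = 0$ after modifying $C$ on a compact set.

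The third step is to verify Assumption~\ref{asmp:RV} with $\beta = 1/\alpha$. The inverse of a regularly varying function with index $\alpha > 0$ is regularly varying with index $1/\alpha$, by \cite[Appendix 6, Theorem 2.1(v)]{Borovkov}, as used in Lemma~\ref{lemma:beta}(B). So $C^{-1}$ has index $1/\alpha$ at infinity. Composing with $u \mapsto 1/u$ and taking the reciprocal, $G(u) = 1/C^{-1}(1/u)$ is regularly varying at zero with index $1/\alpha$. The hypothesis $\alpha > 1/2$ gives $\beta = 1/\alpha < 2$. For the $\beta \in (0,1)$ case of Theorem~\ref{thm:main}, however, one needs $\alpha > 1$. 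This mismatch is the main point to reconcile. I would read the lemma as covering $\beta \in (0,1)$ whenever $\alpha > 1$, which is consistent with Lemma~\ref{lemma:beta}(B). The range $\alpha \in (1/2, 1]$ would then correspond to $\beta \in [1, 2)$, where Lemma~\ref{lemma:beta}(A) gives finite $\mathbb E[1/W]$ for $\beta > 1$ and hence only the $\sqrt n$ rate of Lemma~\ref{lemma:reg}. I expect this index bookkeeping, rather than the construction itself, to be the main obstacle: stating precisely which $\alpha$ yields $\beta \in (0,1)$. The construction goes through for every $\alpha > 0$.
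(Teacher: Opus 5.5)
Your construction is the one the paper uses: prescribe $G$ through the inverse of the target function, $G(u)=1/(\text{inverse})(1/u)$, and read off the index from the inversion theorem for regularly varying functions. The substantive difference is what gets inverted. The paper takes $F$ to be the inverse of $C^2$, not of $C$, and sets $G(u)=1/F(1/u)$. Tracing through $H$ and $B$, this gives $B(t)=C^2(t)$. So $C$ is really the convergence rate $\sqrt{B(n)}$ in \eqref{eq:beta<1}, not $B$ itself, and the ``$B(t)=C(t)$'' in the statement is meant in that sense.

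That resolves the index mismatch you flagged. $C^2$ has index $2\alpha>1$, so $\beta=1/(2\alpha)\in(0,1)$ for every $\alpha>1/2$. This is why the hypothesis is $\alpha>0.5$, i.e.\ rates faster than $\sqrt n$. You are right that the literal identity $B=C$ is incompatible with $\beta\in(0,1)$ when $\alpha\le 1$. Your restriction to $\alpha>1$ is equivalent after the reparametrization $B=C^2$, so nothing is lost mathematically, but it is not the reading the lemma intends. The clean fix is to run your argument on $C^2$: it is regularly varying with index $2\alpha$ and has derivative $2CC'>0$, so every step you wrote goes through unchanged. Incidentally, the paper's claim $\beta\in(1/2,1)$ holds only for $\alpha<1$; in general $\beta\in(0,1)$.

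Your care in the gluing step is warranted and goes beyond the paper, which uses $G(u)=1/F(1/u)$ as is. Since $F(C^2(1))=1$, that formula gives $G(1/C^2(1))=1$ and $G>1$ beyond that point. So it must be cut off, and exact equality of $B$ with the target for all $t>1$ would force $W$ to be bounded, contradicting the positivity of $g_W$ on $(0,\infty)$ in Assumption~\ref{asmp:basic}. Your approach is the right way to make the lemma rigorous: match $G$ near $0$, attach a positive tail, and get equality only for large $t$. This suffices because Theorem~\ref{thm:main} is asymptotic.

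One small slip in your side remark on exact equality. Since $B$ is defined only for $t>1$, the needed normalization is that $C$ map $(1,\infty)$ onto $(0,\infty)$, i.e.\ $C(1+)=0$. With $C(0+)=0$ on $(0,\infty)$, your formula would give $G(u)\to\infty$ as $u\to\infty$.
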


\begin{proof} Indeed, just take an inverse function $F$ to $C^2$: $F(C^2(t)) = t$. The function $C^2$ is strictly positive and has strictly positive derivative. Therefore, the inverse function also has strictly positive derivative. By the results of \cite[Appendix 6, Theorem 2.1(v)]{Borovkov}, this function is $1/(2\alpha)$ at infinity. Then define $G(u) = 1/F(1/u)$ to be the cumulative distribution function of $W$. Following \cite[Appendix 6, Theorem 2.1]{Borovkov}, this is a regularly varying function at zero with index $\beta = 1/(2\alpha) \in (1/2, 1)$. 
\end{proof}

It is more complicated to find all possible convergence rates for $\beta = 1$. We leave it for the future resesarch.  Some examples are below. 

\subsection{Examples} First, consider the case $\beta \in (0, 1)$.

\begin{example} Fix $\gamma \in \mathbb R$, and assume  
\begin{equation}
\label{eq:G-asymp}
G(u) \sim \mathrm{const}\cdot u^{\beta}|\ln u|^{\gamma},\quad u \to 0.
\end{equation}
Take the logarithm: $\ln G(u) \sim \beta\ln u$. Therefore, $\ln v \sim \beta\ln H(v)$. 
A standard exercise shows that the inverse function $H$ of $G$ satisfies
$$
H(u) \sim \mathrm{const} \cdot u^{1/\beta}|\ln u|^{-\gamma/\beta}
$$
as $u \to 0$. And the function $B(t) = 1/H(1/t)$ satisfies
$$
B(t) \sim \mathrm{const}\cdot t^{1/\beta}(\ln t)^{\gamma/\beta},\quad t \to \infty. 
$$
Therefore, the convergence rate for $\beta \in (1/2, 1)$ is 
$$
\sqrt{B(t)} \sim \mathrm{const}\cdot t^{1/(2\beta)}(\ln t)^{\gamma/(2\beta)}.
$$
By choosing $a = 1/(2\beta)$ and $b = \gamma/(2\beta)$, we can make any convergence rate $t^a(\ln t)^b$ for any $a \in (0.5, 1)$ and $b$.  
\end{example}

For $\beta = 1$, it is harder to compute the convergence rate. From~(\ref{eq:G-asymp}), we get: 
$$
G(1/u) \sim \mathrm{const}\cdot u^{-1}(\ln u)^{\gamma}.
$$
Assumption~\ref{asmp:infty} holds if and only if $\gamma \le -1$. Here, in turn, we consider two cases: $\gamma = -1$ and $\gamma < -1$. 

\begin{example} For $\gamma = -1$, we have $G(u) \sim \mathrm{const}\cdot u^{\beta}|\ln u|^{-1}$. We compute
$$
\int_0^{t}G(1/u)\,\mathrm{d}u \sim \mathrm{const}\cdot \ln\ln t,\quad t \to \infty.
$$
Since $B(t) \sim \mathrm{const}\cdot t(\ln t)^{-1}$, we get: 
$$
A(t) \sim \mathrm{const}\cdot t\ln\ln t
$$
as $t \to \infty$. The convergence rate is 
$$
\sqrt{A(t)} \sim \mathrm{const}\cdot \sqrt{t\ln\ln t}.
$$
\end{example}

\begin{example} Next, for $\gamma < -1$, we have $G(u) \sim \mathrm{const}\cdot u^{\beta}|\ln u|^{-\gamma}$. We get: As $t \to \infty$, 
$$
\int_0^{t}G(1/u)\,\mathrm{d}u \sim \mathrm{const}\cdot (\ln t)^{1+\gamma}.
$$
Also, $B(t) \sim \mathrm{const}\cdot t(\ln t)^{\gamma}$. Therefore, as $t \to \infty$, 
$$
A(t) = t\int_0^{B(t)}G(1/u)\,\mathrm{d}u = \mathrm{const}\cdot t(\ln t)^{1 + \gamma}.
$$
Therefore, the convergence rate is 
$$
\sqrt{A(t)} \sim \mathrm{const}\cdot t^{1/2}(\ln t)^{(1 + \gamma)/2}.
$$
By choosing $b = (1+\gamma)/2$, we can make any convergence rate of the type $t^{1/2}\cdot (\ln t)^b$ for any $b > 0$. 
\end{example}

Similarly, we can have convergence rates of the type $t^a\cdot (\ln t)^b\cdot (\ln\ln t)^c$, for $a \ge 1/2$, $b, c \in \mathbb R$, so that if $a = 1/2$, then either $b > 0$, or $b = 0$ and $c > 0$. We can have even more iterations of logarithmic functions, as long as the rate is asymptotically faster than the standard convergence rate $t^{1/2}$.

\section{Proof of Theorem~\ref{thm:main}}

\subsection{Overview of the proof} The proof is based on classic results about domains of attraction and stable limit theorems, taken from \cite[Chapter 8, Theorem 8.1]{Borovkov}, also stated and proved in \cite[Appendix 7, Theorem 1.1]{Borovkov}. First, in Lemma~\ref{lemma:tech} we show we can rewrite with a $Z \sim \mathcal N(0, 1)$ independent of $W_1, \ldots, W_n \sim Q$: 
\begin{equation}
\label{eq:final-repr}
\hat{\mu}_n - \mu = \frac{V}{\sqrt{S(n)}},\quad S(n) := \sum\limits_{i=1}^nW_i^{-1}.
\end{equation}
For part (A), where $\beta \in (0, 1)$, we show:
\begin{equation}
\label{eq:main-A}
\frac{S(n)}{B(n)} \stackrel{d}{\to} U_{\beta},\quad n \to \infty.
\end{equation}
For part (B), where $\beta = 1$, it sufffices to show that 
\begin{equation}
\label{eq:main-B}
\frac{S(n)}{A(n)} \stackrel{d}{\to} 1,\quad n \to \infty.
\end{equation}
Combining~(\ref{eq:main-A}) with~(\ref{eq:final-repr}), we get~(\ref{eq:beta<1}) and prove (A).  Combining~(\ref{eq:main-B}) with~(\ref{eq:final-repr}), we get~(\ref{eq:beta=1}) and prove (B). Thus, we complete the proof of Theorem~\ref{thm:main}. 

\subsection{Proof of~(\ref{eq:main-A})} To show~(\ref{eq:main-A}), we apply the aforementioned results from \cite{Borovkov}: Chapter 8, Section 8, Theorem 8.1(i). We need to match the notation: $\xi_k := W_k^{-1}$, 
$$
F_+(t) = \mathbb P(1/W \ge t) = \mathbb P(W \le 1/t) = G(1/t);
$$
and $F_-(t) = 0$ because $1/W > 0$; 
$$
F_0(t) = F_+(t) + F_-(t) = G(1/t);
$$
and this function is regularly varying at infinity with index $-\beta$. Moreover, $b(n) = F_0^{(-1)}(1/n)$, where $F_0^{(-1)}(t) = 1/H(t)$ is the inverse of $F_0(t) = G(1/t)$. Thus $b(n) := B(n)$. Next, $V = F_+$ and $W = 0$. Therefore, $\rho_+ = 1$ and $\rho = 1$. Also, we use discussion on one-sided stable distributions after \cite[Chapter 8, Theorem 8.1]{Borovkov}. This shows that $\zeta^{(1, \rho)}$ in the notation of \cite{Borovkov} is, in fact, the stable subordinator $U_{\beta}$. Finally, we need  Assumption~\ref{asmp:infty}. Otherwise, per discussion before \cite[Chapter 8, Theorem 8.1]{Borovkov}, we would need to subtract this mean to get such weak convergence. The rest of~(\ref{eq:beta<1}) simply follows from the aforementioned theorem \cite[Chapter 8, Theorem 8.1]{Borovkov}. 

\subsection{Proof of~(\ref{eq:main-B})} Similarly to the case (A), we apply the aforementioned results from \cite{Borovkov}: Chapter 8, Section 8, Theorem 8.1(ii). We have the same notation, and some new notation:
$$
V_I(t) = \int_0^tV(y)\,\mathrm{d}y = \int_0^tG(1/y)\,\mathrm{d}y,\quad W_I = 0.
$$
Define $C \approx 0.5772$ to be the Euler's constant. We get $\rho = 1$, since there is no left tail. In the notation of \cite[Chapter 8, Section 8]{Borovkov}, 
\begin{align}
\label{eq:appendix}
\begin{split}
A_n &= \frac{n}{b(n)}\left[V_I(b(n)) - W_I(b(n))\right] - \rho C \\ & = \frac{n}{B(n)}\int_0^{B(n)}G(1/y)\,\mathrm{d}y - C.
\end{split}
\end{align}
Applying Lemma~\ref{lemma:beta}~\textsc{(C)} to~(\ref{eq:appendix}), we get:
$$
A_n = \frac{A(n)}{B(n)} - C \sim \frac{A(n)}{B(n)},\quad n \to \infty.
$$
The main result from \cite[Chapter 8, Theorem 8.1, case $\beta = 1$]{Borovkov}: A weak convergence to a limit $\zeta$ (whose exact distribution is not important for us): 
\begin{equation}
\label{eq:fundamental}
\frac{S(n)}{B(n)} - A_n \stackrel{d}{\to} \zeta,\quad n \to \infty.
\end{equation}
It suffices to note that $B(n)A_n \sim A(n)$ as $n \to \infty$. Divide~(\ref{eq:fundamental}) by $A_n$, and apply Lemma~\ref{lemma:beta}~\textsc{(C)} again to finish the proof of~(\ref{eq:beta=1}):  
$$
\frac{S(n)}{B(n)A_n} \stackrel{d}{\to} 1,\quad n \to \infty.
$$

\section*{Acknowledgments}
The author thanks the Department of Mathematics and Statistics, University of Nevada, Reno, for welcoming atmosphere for research. The author also thanks the referee and the editor for useful comments which led to a significant improvement. 

The author received no external funding.

\end{document}